\documentclass[a4paper,11pt]{article}
\usepackage[utf8]{inputenc}
\usepackage[T1]{fontenc}

\usepackage{amsthm,amsmath}
\usepackage{tikz}
\usepackage{mathrsfs,amssymb,amsfonts} 
\usepackage{enumitem}
\usepackage{hyperref}
\usepackage[babel]{microtype}
\usepackage[english]{babel}
\usepackage[capitalise]{cleveref}

\usepackage{thmtools}
\usepackage{mathtools, comment}
\usepackage[nomath]{lmodern}
\usepackage{graphicx}
\usepackage{pgf,tikz,tkz-graph,subcaption}
\usetikzlibrary{arrows,shapes}
\usetikzlibrary{decorations.pathreplacing}
\usepackage{tkz-berge}
\hypersetup{colorlinks = true, linkcolor = blue, citecolor = blue, urlcolor = blue}

\newcommand*{\ceilfrac}[2]{\mathopen{}\left\lceil\frac{#1}{#2}\right\rceil\mathclose{}}
\newcommand*{\floorfrac}[2]{\mathopen{}\left\lfloor\frac{#1}{#2}\right\rfloor\mathclose{}}

\newcommand{\G}{\mathcal G}

\newcommand{\eps}{\varepsilon}

\allowdisplaybreaks

\usepackage[margin=1in]{geometry}
\newtheorem{defi}{Definition}

\newtheorem{thm}[defi]{Theorem}

\newcommand*{\myproofname}{Proof}

\newcommand*{\avp}{avp}

\newcommand{\diam}{diam}

\newcommand{\pn}{pn}

\title{The number and average length of subpaths in graphs}

\author{Stijn Cambie
 \thanks{Department of Computer Science, KU Leuven Campus Kulak-Kortrijk, 8500 Kortrijk, Belgium. Supported by a postdoctoral fellowship by the Research Foundation Flanders (FWO) with grant number 1225224N.} }%\and Nounours Cambie

\begin{document}
\maketitle
\begin{abstract}
    We study extremal questions on the average length and the number of subpaths in a graph.
    In particular, we prove the questions of Jamison (from $1983, 1984$) for the analogous concept of the average length of a subpath. Among other results, we prove that $K_n$ maximizes the average path length.
    % the latter being a conjecture which is still open for average subtree order.  
    % We also  the solution of two conjectures from Knor, Sedlar, \v{S}krekovski and Yang on the number of (sub)paths in a graph.
\end{abstract}

\section{Introduction}
The number of (sub)paths of length $k$ in a graph $G$ is denoted with $\pn_k(G)$. It is equal to the number of subgraphs of $G$ isomorphic to $P_{k+1}.$
Since paths are among the simplest graphs, they have been investigated in extremal questions.
E.g. the maximum of $\pn_3(G)$ has been determined in~\cite{GGPSTZ22} among planar graphs of a given order
and in~\cite{Byer01} for graphs of a fixed size.
Also Hamiltonian paths have been studied in different settings, see e.g.~\cite{AAR01,CFMNRZ21} for examples in other contexts. The number of Hamiltonian paths is counted by $\pn_{n-1}(G)$ (where $n$ is the order of the considered graph(s)).

The total number of paths in $G$, the (sub)path number of the graph, is denoted with $\pn(G)$, which equals $\sum_{i=0}^{n-1} \pn_i(G).$
We will call $\overline{\pn}(G)=\sum_{i=1}^{n-1} \pn_i(G)$ the reduced path number (not considering the single vertices as paths).
The average number of paths between two vertices is $\frac{ \overline{\pn}(G)}{\binom n2}.$
The latter has been studied in e.g.~\cite{vM01}.
The average number of paths gives an idea about the hardness of routing problems; the more potential paths there are, the harder it might be to find the shortest one. In this paper, we assume the considered graphs are connected.

Not only the number of paths in a graph has been studied. In various contexts, one has studied the number of induced subgraphs and number of subtrees.
See e.g.~\cite{Matula70,SW05,CJG23} (and cited/ citing papers) for a few examples. 

The quantity $\avp(G)=\frac{ \sum_{i=0}^{n-1} i \cdot \pn_i(G) }{\pn(G)}$, is the average length of the subpaths of a graph, which is analogous to the mean subtree order (average of the order over all subtrees, as defined in~\cite{jamison_average_1983}) of a graph, but focused on subpaths.
We also defined the reduced version $\overline{\avp}(G)=\frac{ \sum_{i=1}^{n-1} i \cdot \pn_i(G) }{\overline{\pn}(G)}$.

Note that the latter can be useful when studying potential shortest paths (random walks~\cite{Lovasz96} which are not allowed to be self-intersecting).

For a tree $T$, the average path length is related to the average distance (also called average shortest path length) $\mu.$
The total distance (Wiener index) of a graph, $W(G)$, equals the sum of the distance (lengths of a shortest path) between any two vertices, and $\mu(G)=\frac{W(G)}{\binom n2}.$
Now $\overline{\avp}(T)=\mu(T)$ and
$\avp(T)=\frac{W(T)}{\binom{n+1}{2}}= \frac{n-1}{n+1}\mu(T).$

Being a natural and basic concept, it is good to have some intuition on the extremal behaviour and some characteristics under graph operations.
We answer the questions from Jamison~\cite{jamison_average_1983, jamison_monotonicity_1984} and some further problems that arose from his initial work, when focusing on subpaths instead of subtrees.
See~\cref{thm:main}.

We remark that the analogue of the caterpillar conjecture~\cite[Prob.~(7.1)]{jamison_average_1983} was trivially resolved, since every tree $T$ of order $n$ satisfies $ \avp(T) \le \avp(P_n)$. 
% Here one can observe that $P_n$ minimizes the average subtree order, showing a stark contrast.
Also the analogue of determining the maximum among graphs is resolved, $ \avp(G) \le \avp(K_n)$,
while it is open for the average subtree order~\cite{CJW25}.

In~\cite{KSSY25}, the authors posed a few natural questions on the number of subpaths in a graph. They wondered about the extrema of the path number of (connected) regular graphs. For the first non-trivial case, they conjectured~\cite[Conj.~16]{KSSY25} that every cubic graph satisfies $\pn(G)=\omega\left( \sqrt 2 ^n\right)$ by conjecturing an explicit extremal graph in this class.
We prove that this is far from the truth and the correct bound is quadratic in the order.
% While the number of connected vertex subsets is exponential in the order (see e.g.~\cite{CJG23}), the number of paths can be polynomially large.
Finally, we prove that among triangle-free graphs of order $n$, $K_{ \floorfrac n2, \ceilfrac n2}$ (uniquely) maximizes the number of subpaths, confirming~\cite[Conj.~18]{KSSY25}.

\textbf{Reminder of some definitions and notation}

The minimum degree of a graph is denoted by $\delta(G).$
The local average path length of a graph $G$ in a vertex $v$, $\avp(G,v)$ is the average length of all paths containing $v$.
When not considering a single vertex as a subpath of a graph parameter $p$, we speak about the reduced version, denoted by $\overline p.$
A caterpillar is a tree which becomes a path after removing its leaves.

\subsection{Note on future work}

This paper addresses the analogs of the problems of~\cite{jamison_average_1983,jamison_monotonicity_1984} in the setting of the average length of subpaths.
Some of those are/were open at the moment of writing.
The final resolutions should appear in the near future. In particular, an updated version of~\cite{CJW25} will contain both directions of the initial version of item 8 in~\cref{thm:main}.

\section{Some results on the average length of subpaths}\label{sec:avp}

\begin{thm}\label{thm:main}
    \begin{enumerate}
    \item \label{itm:7.1&7.2} For every tree $T$ of order $n$, $\avp(S_n) \le \avp(T) \le \avp(P_n)$.
    \item\label{itm:7.3} There are non-isomorphic trees with the same order and average path length.
    \item\label{itm:7.4} The ratio of the average length of subpaths in $G$ containing a vertex $v$ ($\avp(G,v)$), and $\avp(G)$ is dense in $\mathbb R^+.$ Restricted to trees, it is dense in $(1/2, +\infty).$ 
    \item\label{itm:7.5} The vertex $v$ in which $\avp(G,v)$ is maximum can have any degree.
    \item\label{itm:7.6} Every tree which is not a star has a $1$-associate with lower average path length.
    \item\label{itm:5.6} Both removing an edge in a graph and contracting an edge in a tree can increase the $\avp$.
    \item\label{itm:Vinceconj7} For every $r \ge 3$, there are graphs $G$ with $\delta(G) \ge r$ for which $\avp(G)= O(log(n)).$
    \item\label{itm:2} For every graph $G$ of order $n$, $\avp(S_n) \le \avp(G) \le \avp(K_n)$.
    \end{enumerate}
\end{thm}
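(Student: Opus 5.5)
This is a multi-part statement, and I will treat each item separately, mostly by explicit computation or construction.

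\textbf{Item 1.} For a tree, $\avp(T)=W(T)/\binom{n+1}{2}$. Among trees of order $n$, the Wiener index is minimized by the star and maximized by the path (classical), which gives item 1 at once.

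\textbf{Item 2.} Since $\avp$ of a tree is determined by $W(T)$, it suffices to exhibit two non-isomorphic trees of equal order and equal Wiener index. Small examples of order $8$ or so are classical, and I would simply exhibit and compute one such pair.

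\textbf{Items 3 and 4.}
\begin{itemize}[label={}]
\item For density of $\avp(G,v)/\avp(G)$ in $\mathbb R^+$, I would use two-parameter families. For large ratios, take a clique $K_m$ with a long pendant path attached and let $v$ be the far end of the path, so every path through $v$ is long while most paths overall live in the clique. For small ratios, let $v$ be a leaf attached to a small part, so the count $\pn(G)$ is dominated by long paths elsewhere. Tuning the two parameters continuously, e.g.\ a clique $K_a$ joined to a path $P_b$ with a pendant star, and taking limits of rationally varying ratios, should give density.
\item For trees, every path through $v$ restricted to the paths ending at $v$ already has average length comparable to half of the average, which yields the lower bound $1/2$. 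I would show the bound is approached by letting $v$ be the centre of a long path (or of a spider with many long legs) and sent to $+\infty$ by a leaf of a long path attached to a big star.
\item For item 4, take a vertex of degree $d$ at the end of a long path, with $d-1$ pendant leaves, and verify that it maximizes the local average.
\end{itemize}

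\textbf{Items 5, 6 and 7.}
\begin{itemize}[label={}]
\item Item 5: moving a leaf toward a centre decreases $W$. A non-star tree has a leaf whose neighbour is not a maximum-degree vertex (or the tree has diameter $\ge3$), and reattaching that leaf to a vertex adjacent to its neighbour strictly reduces total distance. This needs a short case check.
\item Item 6: removing an edge can kill many short paths; e.g.\ remove a chord creating a Hamiltonian-ish structure, or from $K_4$ minus stuff. I would find small explicit examples by computation. For trees, contracting an edge in the middle of a broom-like tree changes $W$ and $n$ simultaneously, and I would pick an example where $W/\binom{n+1}{2}$ increases, such as contracting an edge adjacent to a star centre.
\item Item 7: the graph needs minimum degree at least $r$ but few long paths relative to short ones. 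I would take a complete binary-type tree of depth $\log n$ in which every leaf is blown up into a copy of $K_{r+1}$, or $r$-regular trees with leaf cliques. Paths then have length $O(\log n)$ apart from the lengths contributed inside cliques, which are bounded by constants. The main obstacle is that within cliques there are exponentially many paths in $r$, but these are constant, so the bound is $O(\log n)$.
\end{itemize}

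\textbf{Item 8, the main obstacle.} The lower bound follows because every graph contains a spanning tree $T$ with $\avp\ge\avp(T)\ge\avp(S_n)$? That step is not monotone in general, so I would instead argue directly: $\pn_1=m$, and each longer path can be matched to shorter ones. For the upper bound, $\pn_k(K_n)=n!/(2(n-k-1)!)$ is increasingly concentrated at $k=n-1,n-2$. I would compare distributions: show that for any $G$ the ratio $\pn_{k+1}(G)/\pn_k(G)$ is at most $(n-k-1)$, the ratio in $K_n$, by extending each $k$-path at its ends in at most $2(n-k-1)$ ways while each $(k+1)$-path arises from $2$ such extensions. This gives a monotone likelihood ratio comparison with $K_n$, hence stochastic domination and the mean inequality. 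For the lower bound, I would similarly show $\pn_{k+1}(G)/\pn_k(G)\ge$ the star's ratio whenever the star's ratio is nonzero, i.e.\ compare $k=0,1,2$ using $\pn_2=\sum\binom{d}{2}$ and the connectivity inequality $\pn_2\ge m-1$ together with $m\ge n-1$.
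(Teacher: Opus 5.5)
Items 1, 2 and 7 and the upper bound in item 8 are fine. Item 2 uses an explicit pair rather than the paper's pigeonhole count. In the upper bound, at $k=0$ the two ends of a single vertex coincide, so the correct comparison is $\pn_1(G)/\pn_0(G)\le (n-1)/2$, which is the $K_n$ value, not $n-1$; after that, the likelihood-ratio/stochastic-domination step is what the paper intends.

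\textbf{Item 8, lower bound.} This rests on a false inequality. $\pn_2(G)/\pn_1(G)\ge \pn_2(S_n)/\pn_1(S_n)=(n-2)/2$ fails for $P_n$ (ratio $(n-2)/(n-1)$) and for $C_n$ (ratio $1$). Graphs beat the star through long paths, not through many $P_3$'s, so there is no likelihood-ratio ordering against $S_n$. The missing idea is to compare only with the constant $\avp(S_n)=2(n-1)^2/(n(n+1))<2-\frac1{n-1}$. For a tree, $\pn_0=n$ and $\pn_1=n-1$, and each of the remaining $\binom{n-1}2$ pairs gives exactly one path of length $\ge2$; hence $\avp(T)\ge\avp(S_n)$. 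Then add the other edges of $G$ to a spanning tree one at a time. The new paths are those through the new edge $uv$: $uv$ itself and at least $n-2$ paths of length $\ge2$ (extend $uv$ by a shortest path to each other vertex). They therefore average at least $2-\frac1{n-1}>\avp(S_n)$, so the weighted average stays above $\avp(S_n)$. This is how the paper sidesteps the non-monotonicity you correctly noticed.

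\textbf{Item 4.} Your broom picks the wrong vertex. If a path of length $L$ ends at $c$ and $c$ carries $d-1$ extra leaves, the maximum of $\avp(T,\cdot)$ is at the far end $p_0$, not at $c$. For $d=3$, $L=10$ one gets $\avp(T,p_0)=77/13\approx5.92$ against $\avp(T,c)=189/34\approx5.56$. For fixed $d\ge3$ and large $L$, $\avp(T,p_0)\approx\frac L2+\frac{d-1}2$ while $\avp(T,c)\approx\frac L2+\frac{(d-1)(6-d)}{4d}$. This broom is in fact the paper's witness for degree $1$. For degree $b+2$ the paper takes $P_3$ with $a$ leaves at each end and $b$ at the centre, $a\gg b$: the centre's local average tends to $4$, while every other vertex has about $3$ or $10/3$.

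\textbf{Item 3.} Your witnesses near $1/2$ do not work. Every vertex of $P_n$ has $\avp(P_n,v)\sim n/2$ while $\avp(P_n)\sim n/3$, giving ratio $3/2$. The centre of a spider with $k$ long legs gives $3k/(3k-2)\in(1,3/2]$. What works is a leaf of the star in a broom whose path has length $bn$: the ratio tends to $3/(2(3-2b))$, sweeping $(1/2,3/2)$. The path-neighbour of the star centre gives $3/(2b(3-2b))$, sweeping $[4/3,\infty)$. The strict bound $>1/2$ needs two ingredients: $(n-1)\sigma(v)\ge W(T)$, which follows from the triangle inequality, and the fact that paths having $v$ as an internal vertex are on average at least as long as the $n$ paths ending at $v$. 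For small ratios in graphs you only ensure that long paths dominate globally. You also need the paths through $v$ to be dominated by short ones, even though they can run into the long part. The paper gets this from two cliques joined by a long path, with $v$ pendant at the junction vertex $u$ of $K_a$ and $N_u\gg nN_w\gg n^3$.

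\textbf{Items 6 and 5.} The graph half of item 6 needs an actual example. One that works is a triangle hung at the middle of a long path: its outer edge lies on about $2n$ paths of average length about $n/4<n/3$, so deleting it raises $\avp$. In item 5, ``the leaf's neighbour is not of maximum degree'' is not the right criterion. A leaf at a degree-$3$ vertex whose two other branches both have $m$ vertices increases $W$ by $1$ when moved to either neighbour. The paper instead moves the end $v_0$ of a diametral path with $\deg v_1\le\deg v_{d-1}$ to $v_2$, which changes $W$ by $2\deg(v_1)-1-n<0$.
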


The different items of the theorem immediately address the analogues of corresponding problems;~\cref{itm:7.1&7.2} for \cite[Prob.~(7.1) \& (7.2)]{jamison_average_1983}, \cref{itm:7.3} \cite[Prob.~(7.3)]{jamison_average_1983}, \cref{itm:7.4} \cite[Prob.~(7.4)]{jamison_average_1983}, \cref{itm:7.5} \cite[Prob.~(7.5)]{jamison_average_1983}, \cref{itm:7.6} \cite[Prob.~(7.6)]{jamison_average_1983}, \cref{itm:5.6} tackes both \cite[Conj.~5.6]{jamison_monotonicity_1984} and~\cite[Conj.~7.4]{CGMV18}, and
\cref{itm:Vinceconj7} for~\cite[Conj. 7]{Vince20}.
While~\cref{itm:5.6} disproves~\cite[Conj.~5]{CGMV18}, the intended corollary that $K_n$ would be extremal is proven in our context as~\cref{itm:2}.

\begin{proof}
\begin{enumerate}
    \item % \Cref{itm:7.1&7.2} 
    This follows directly from the folklore result $\mu(S_n) \le \mu(T) \le \mu(P_n)$.
    \item This is immediate from the pigeon hole principle, since there are exponentially many trees of order $n$ and the total distance is polynomially bounded by $n.$ A stronger version is proven in~\cite[cor.~1]{WW14}. 
    \item For a tree, $\avp(T, v) > \frac 12 \avp(T)$ as a corollary to a general bound between the transmission  $\sigma$ (sum of distances towards a fixed vertex) and the total distance, $W$,~\cite[Thm.~2.1]{HK14} and~\cite[Lem.~26]{CD25}, $(n-1)\sigma(v) \ge W(G)$ for all $v \in V(G)$, and observing that the average length of paths containing $v$ as an internal vertex is larger than the average length of paths having $v$ as an end vertex, leading to
    \[
 \frac{\avp(T,v)}{\avp(T)}
 \geq
 \frac{\sigma_T(v)/n}{W(T)/\binom{n+1}{2}}
 \geq \frac{n+1}{2(n-1)}
 >\frac12.
\]

    In the other direction, take a broom $B_{x,y}$ of order $n$ (concatenation of path $P_{y}$ and star $S_{x}$), where $y/n \to b$ and $x/n \to (1-b).$
    
    If $z$ is a leaf of the star, standard computations verify that
\[
 \avp(B_{x,y})
 =\frac{W(B_{x,y})}{\binom{n+1}{2}}
 \sim \frac{b^2(3-2b)}3n \mbox{ and }
\]
\[
 \avp(B_{x,y},z)=\frac{\sigma(z)}n
 \sim\frac{b^2}{2}n, \mbox{ and thus}
\]
\[
 \frac{\avp(B_{x,y},z)}{\avp(B_{x,y})}
 \longrightarrow
 f(b):=\frac{3}{2(3-2b)}.
\]
As $b$ varies in $(0,1)$, the values $f(b)$ fill the interval
$(1/2,3/2)$ in the limit.

If $w$ is the only non-leaf neighbour of the high degree vertex in the broom, then
\[
 \avp(B_{x,y},w)\sim\frac b2 n
\]
and hence
\[
 \frac{\avp(B_{x,y},w)}{\avp(B_{x,y})}
 \longrightarrow
 g(b):=\frac{3}{2b(3-2b)}.
\]
The function $g$ has minimum $4/3$, attained at $b=3/4$, and tends
to infinity as $b\downarrow0$.

    Now we turn to the graph setting. Connect the vertex $u$ of a clique $K_a$ and a vertex $w$ of $K_b$ with a long path, and add a pendent edge $uv.$ Here $a,b=o(n)$ and the number of paths, $N_u$ and $N_w$, in $K_a$ and $K_b$ having $u$ resp. $w$ as an end vertex, satisfy $N_u \gg n N_w\gg n^3.$
    Then $\avp(G) \sim n$ and $\avp(G,v)=o(n).$ Again varying the ratios, and the neighbour of $v$ on the central path, will result in density results.

\begin{figure}[ht]
    \centering
    \begin{tikzpicture}[scale=0.75]

\draw[dashed] (0,0) ellipse (1.cm and 1cm);
\draw[dashed] (9.5,0) ellipse (1.5cm and 1.5cm);
\draw(1,0)--(3,0);
\draw[dotted] (3,0)--(6,0);
\draw(6,0)--(8,0);
\foreach \x in {1,2,3,6,7,8}{
\draw[fill] (\x,0) circle (0.1);
}
\draw[fill] (7.5,1) circle (0.1);
\draw(7.5,1)--(8,0);

\node at (1.25,.3) {$w$};
\node at (7.5,1.3) {$v$};
\node at (7.75,-0.3) {$u$};
\node at (9.5,0) {$K_a$};
\node at (0,0) {$K_b$};
    \end{tikzpicture}
    \caption{A graph with $\avp(G)=n-o(n)$ and $\avp(G,v)=o(n)$ }
    \label{fig:enter-label}
\end{figure}

    \item A broom shows that degree $1$ is possible.
    For degree $b+2$ at least $2$, take a path $P_3$, connect each of its endvertices with $a$ additional vertices and the central vertex with $b$ additional vertices, to obtain a caterpillar of order $2a+b+3.$
    If $a \gg b$, the central vertex is the one maximizing the local average path length.

    \begin{figure}[ht]
    \centering
    \begin{tikzpicture}

\draw(0,0)--(2,0);

\foreach \x in {-1,-0.7,0.7,1}{
\draw[fill] (-1,\x) circle (0.1);
\draw[fill] (3,\x) circle (0.1);
\draw(0,0)--(-1,\x);
\draw(3,\x)--(2,0);
}
\foreach \x in {0,1,2}{
\draw[fill] (\x,0) circle (0.1);
}

\draw[dotted] (-1,0.5)--(-1,-0.5);
\draw[dotted] (3,0.5)--(3,-0.5);
\draw[dotted] (0.75,1)--(1.75,1);

\foreach \x in {-0.7,-0.4,0.4,0.7}{
\draw[fill] (1+\x,1) circle (0.1);
\draw(1,0)--(1+\x,1);
}

\node at (1,-0.25) {$v$};
\node at (-1.25,0) {$a$};
\node at (3.25,0) {$a$};
\node at (1,1.25) {$b$};
    \end{tikzpicture}
    \caption{A graph where the maximum local average path length $\avp(G,v)$ appears in a vertex of degree $b+2.$}
\end{figure}

    \item Take a diameter $v_0v_1 \ldots v_d$ (a path between $v_0$ and $v_d$ where $d(v_0,v_d)=\diam(T)$), and assume without loss of generality that $\deg(v_1) \le \deg(v_{d-1}).$
    Removing the edge $v_0v_1$ and adding $v_0v_2$ results in a tree (a $1$-associate) with lower total distance.
    \item Take a path $P_{n-1}$ (for $n$ sufficiently large) and add a pendent vertex $v$ to one of its central vertices. Then contracting this pendent edge increases the average path length. The latter being true since $\avp(G,v) \sim \frac n4$ while $\avp(P_{n-1}) \sim \overline{\avp}(P_{n-1})=\frac n3.$

    Connect the end vertices of a $K_2$ with a fixed central vertex of a path $P_{n-2}$. Deleting the initial $K_2$ will increase $\avp$ for $n$ sufficiently large, analogously to the contraction case. 
    (By connecting the two vertices which are at distance $\eps n$ from the leaves of a path $P_n$, one can note that the average path length can also increase linearly with $n$ by adding one edge)
    
    \item Taking a $r$-ary tree and connecting its leafs to $K_r$s will do the job.
    One can even adapt this to $r$-regular constructions (see~\cref{fig:pn_quadratic_regular} for an idea when $r=3$). Here the construction for $r$ odd is easy. For $r$ even, modified regular constructions work as well (e.g. starting from a particular cactus graph instead of tree).
    
    \item The lower bound is easy.
    First, for a tree $T$ we have $\pn_0(T)=n,$ $\pn_1(T) = n-1=\pn_1(S_n)$ and $\sum_{i \ge 2} \pn_i(T) = \binom{n-1}{2}=\pn_2(S_n)$. Hence $\avp(T) \ge \avp(S_n),$ and equality only occurs when $T$ has diameter $2$, i.e., $T=S_n.$
    Starting from a spanning tree of $G$, every additional edge implies that at least $n$ paths are added (extend the edge with a path towards every other vertex).
    Thus $\pn_1$ raises by $1$, and $\sum_{i \ge 2} \pn_i$ by at least $n-1$.
    The average length of these additional paths is larger than $\avp(S_n)$, and thus the result follows inductively.
    
    For the upper bound, one can observe that every graph $G$ of order $n$ and integer $k \ge 1$ satisfies $\frac{\pn_{k}(K_n)}{\pn_{k-1}(K_n)}\ge \frac{\pn_{k}(G)}{\pn_{k-1}(G)}$.
    The latter since every path of length $k-1$ with a particular start vertex $v$ can be extended to such a path of length $k$ in at most $n-k$ ways (and equality is attained by $K_n$).
    From this, one can (e.g. by proving the inequality inductively on the average of path lengths bounded by a certain $k$) derive that $\avp(G) \le \avp(K_n)$. The latter is analogous to~\cite[Sec.~5]{CJW25}, where the implications are written down in more detail.
    \qedhere
    \end{enumerate}
\end{proof}

\section{Results on the number of subpaths}\label{sec:pn}

The following theorem disproves~\cite[Conj.~16]{KSSY25} in a strong way, proving that a quadratic instead of exponential behaviour.

\begin{thm}\label{thm:conj16ontkracht}
    Let $r \ge 3.$
    Among $r$-regular graphs of order $n$, the minimum path number is quadratic in its order.
\end{thm}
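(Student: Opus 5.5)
We prove matching lower and upper bounds of order $n^2$. Note that the statement only makes sense for $n$ with $rn$ even, and that for each such $n$ an $r$-regular graph of order $n$ exists. We also assume the graphs are connected, as is done throughout the paper.

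\emph{Lower bound.} This part is trivial. Any connected graph of order $n$ has, for each pair of distinct vertices $u,v$, at least one path between them, namely a shortest one. Hence $\overline{\pn}(G)\ge\binom n2$ and $\pn(G)\ge \binom{n+1}{2}=\Omega(n^2)$. This holds for every connected graph, regular or not.

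\emph{Upper bound.} The plan is an explicit construction of a connected $r$-regular graph $G$ with $\pn(G)=O_r(n^2)$. The guiding picture is the one hinted at in item~\ref{itm:Vinceconj7} of \cref{thm:main} and in \cref{fig:pn_quadratic_regular}: a long ``path of gadgets''. Take constant-size blocks $B_1,\dots,B_m$, with $m=\Theta(n/r^2)$. Each block is a near-clique, say $K_{r+1}$ with one edge removed or a similar small graph in which all but two vertices already have degree $r$. Consecutive blocks are joined by a single bridge edge $e_i$ between $B_i$ and $B_{i+1}$, and the two end blocks are closed off by suitable small $r$-regular-minus-one-vertex gadgets. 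For example, for $r$ odd use $K_{r+1}$ with a subdivided edge structure; for $r$ even use two bridges in parallel-free form, or a cactus-like arrangement. The parity and degree bookkeeping for the end gadgets, and for covering all residues of $n$, are routine. One adjusts by using a bounded number of blocks of slightly different (still constant) sizes.

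\emph{Counting paths.} The key point is that every edge between blocks is a bridge, or at least that blocks are separated by cuts of bounded size through which a path can pass only once. Take any path $P$ in $G$. The set of blocks it meets is an interval $B_i,\dots,B_j$. Inside each intermediate block $B_k$ with $i<k<j$, the path must enter through the bridge from $B_{k-1}$ and leave through the bridge to $B_{k+1}$. So its restriction to $B_k$ is a path between two fixed vertices of $B_k$, and there are at most $c_r$ such paths, where $c_r$ is a constant depending only on the block size, e.g.\ $c_r\le (r+1)!$. If we bound the count naively we get $c_r^{\,j-i}$, which is exponential. This is the main obstacle.

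To avoid it, the blocks must be designed so that the crossing vertices are \emph{identical or adjacent in a forced way}. Concretely, I would make each intermediate block have a cut vertex structure in which the entry and exit vertices coincide: the bridges attach at the same vertex $x_k$ of $B_k$. Then a path passing through $B_k$ uses only $x_k$, and the number of crossing choices is exactly $1$. Degree constraints are met by letting $x_k$ have two bridge edges plus $r-2$ edges into a pendant gadget $H_k$, where $H_k$ is a constant-size graph in which every vertex has degree $r$ except $r-2$ vertices of degree $r-1$ (e.g.\ $K_{r+1}$ minus a suitable matching, adjusted by parity). Now the backbone $x_1x_2\cdots x_m$ is an induced path of cut vertices. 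A path is then determined by the following data: its segment on the backbone, which gives $O(m^2)$ choices; and at each of its two ends, an optional excursion into one gadget $H_k$, which gives $O_r(1)$ choices each. Paths lying entirely inside one gadget number $O_r(1)$ per gadget. Hence $\pn(G)=O_r(m^2)+O_r(m)=O_r(n^2)$.

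The one delicate point I expect is to make the end of the backbone regular and to handle every admissible $n$. For the ends, I would give $x_1$ and $x_m$ an extra gadget each. For the values of $n$, I would vary the size of one gadget among boundedly many options so that all residues of $n$ with $rn$ even are realized.
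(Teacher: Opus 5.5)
Your overall strategy matches the paper's. Your blocks $\{x_k\}\cup H_k$ are exactly the paper's building blocks: one vertex of degree $r-2$, all others of degree $r$, joined through the special vertices. Your path count is also correct for a backbone of cut vertices: a backbone segment with $O(m^2)$ choices, an $O_r(1)$ excursion into a gadget at each end, plus $O_r(m)$ local paths.

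The gap is the step you flag as delicate, closing off the ends of the backbone, and it fails for every even $r$. In your design every backbone edge $x_kx_{k+1}$ is a bridge, but an $r$-regular graph with $r$ even has no bridge. Consider the component of $G-x_1x_2$ containing $x_1$. All of its vertices have degree $r$ except $x_1$, which has degree $r-1$, so its degree sum is $rN-1$. For even $r$ this is odd, contradicting the handshake lemma. So no gadget hung on $x_1$ can supply its missing edge. Giving $x_1$ its $r-1$ remaining edges into $H_1$ fails too, by the same parity count. For even $r$ no graph of your shape exists at all. Your earlier remark about ``two bridges in parallel-free form, or a cactus-like arrangement'' belonged to the abandoned first sketch and is never developed.

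The fix is what the paper does: close the backbone into a cycle $x_1x_2\cdots x_mx_1$. Every $x_k$ then has two backbone edges plus $r-2$ edges into $H_k$, no end gadgets are needed, and the construction works for both parities. Your count survives almost verbatim. Each $x_k$ is still a cut vertex separating $H_k$ from the rest, so a path can enter gadgets only at its two ends. Between $x_i$ and $x_j$ with $i\neq j$ there are now two backbone arcs instead of one, giving $\pn(G)\le 2C_r^2\binom n2+O(n)$. For odd $r$ your path version is valid as written and corresponds to the paper's tree variant, with a single route between blocks.
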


\begin{proof}
    Take $\floorfrac{n}{r+2}$ (small) graphs with total order $n$, such that each such graph has the property that all of its vertices except one has degree $r$, and one has degree $r-2.$
    These graphs will be the (building) blocks of our graph.
    We connect all vertices of degree $r-2$ with an additional cycle, to obtain the graph $G$.
    The number of paths in such a small graph is bounded by a constant $C_r$ (each such graph has order bounded by $2r+3$).
    The number of paths between any two vertices in $G$ is bounded by $2C_r^2,$ since such a path is composed by a path between two initial small graphs and a path in these small graphs (possibly it is the same graph or a single vertex). 
    Hence $\pn(G) \le 2C_r^2 \binom{n}{2}+n.$ Here the additional $n$ is for the single vertices which are $P_1$s.
    When $r$ is odd, one can modify the construction with a tree instead of a cycle, and obtain $\pn(G) \le C_r^2 \binom{n}{2}+n$, as there is now only one instead of two subpaths between two blocks.
    Two examples are depicted in~\cref{fig:pn_quadratic_regular}.

    Due to $\binom{n}{2}+n$ being a lower bound, the bound is best possible up to a constant.
\end{proof}

It may well be that with a careful analysis, the sharp bounds can be proved and the extremal graphs are not unique at all for large values of $n$. E.g., by attaching optimal subcubic graphs to the leaves of any tree of the correct size when $r=3.$

\begin{figure}[h!]
 \centering
 \begin{tikzpicture}[scale=1.2]
% \foreach \x in {0,120,240}{
%   \draw[fill] (\x:2) circle (0.15);
%   \draw[fill] (\x+15:2.5) circle (0.15);
%    \draw[fill] (\x-15:2.5) circle (0.15);
%    \draw[fill] (\x+15:3) circle (0.15);
%    \draw[fill] (\x-15:3) circle (0.15);
% }
% \foreach \x in {120,240}{

% }
%    \draw[fill] (15:3.5) circle (0.15);
%    \draw[fill] (-15:3.5) circle (0.15);

\draw[fill] (0,0) circle (0.15);
\draw[fill] (1,0) circle (0.15);
\draw[fill] (0,1) circle (0.15);
\draw[fill] (0,-1) circle (0.15);
\foreach \x in {2,3,4}{
\draw[fill] (\x,0.5) circle (0.15);
\draw[fill] (\x,-0.5) circle (0.15);
}

\draw (0,1)--(0,-1);
\draw (0,0)--(1,0);

\draw (-0.5,2)--(0,1)--(0.5,2);
\draw (-0.5,2)--(-0.5,3)--(0.5,2);
\draw (-0.5,2)--(0.5,3)--(0.5,2);
\draw (0.5,3)--(-0.5,3);

\draw (-0.5,-2)--(0,-1)--(0.5,-2);
\draw (-0.5,-2)--(-0.5,-3)--(0.5,-2);
\draw (-0.5,-2)--(0.5,-3)--(0.5,-2);
\draw (0.5,-3)--(-0.5,-3);

\draw (1,0)--(2,0.5)--(4,0.5)--(4,-0.5)--(2,-0.5)--(1,0);
\draw (2,0.5)--(2,-0.5);
\draw (3,0.5)--(4,-0.5);
\draw (4,0.5)--(3,-0.5);

\foreach \x/\y in {0.5/2,0.5/-2,-0.5/2,-0.5/-2}{
\draw[fill] (\x,\y) circle (0.15);
}
\foreach \x/\y in {0.5/3,0.5/-3,-0.5/3,-0.5/-3}{
\draw[fill] (\x,\y) circle (0.15);
}
% \draw[fill] (0.5,2) circle (0.15);
% \draw[fill] (0,-1) circle (0.15);

\end{tikzpicture}\qquad
 \begin{tikzpicture}
  \foreach \x in {0,40,...,320}{
  \draw[fill] (\x:2.5) circle (0.15);
  \draw[fill] (\x:4) circle (0.15);
  \draw[fill] (\x+10:3) circle (0.15);
   \draw[fill] (\x-10:3) circle (0.15);
   \draw[fill] (\x+10:3.5) circle (0.15);
   \draw[fill] (\x-10:3.5) circle (0.15);
   \draw (\x:4)--(\x-10:3.5)--(\x+10:3.5)--(\x:4);
   \draw (\x-10:3)--(\x:4)--(\x+10:3)--(\x-10:3.5)--(\x-10:3)--(\x+10:3.5)--(\x+10:3);
   \draw (\x-10:3)--(\x:2.5)--(\x+10:3);
   \draw (\x-40:2.5)--(\x:2.5);
}
 % \foreach \x in {45,135,225,315}{
 % \draw[fill] (\x:3.26640741219) circle (0.15);
 % \draw(\x+90:3.26640741219) -- (\x:3.26640741219);
 % }

\end{tikzpicture}

\caption{Example of constructed cubic and quartic graphs with low average path length and $\Theta(n^2)$ many paths }
\label{fig:pn_quadratic_regular}
\end{figure}

The maximum among $r$-regular graphs will be exponential, and so similar as in~\cite{CJG23} (and cited papers therein) one may wonder about the value $\limsup_{G \in \G_r} \sqrt[n]{\pn(G)}$ where $\G_r$ is the family of $r$-regular graphs.
This and the characterization of the extremal graphs in those classes, seem a very hard problem. In particular, the author doubts about the possibility for 
 a general exact determination of~\cite[Prob.~17]{KSSY25}.
For $n \le 16,$ the cubic graph maximizing $\pn(G)$ corresponds with the graphs maximizing the number of connected induced subgraphs determined in~\cite{CJG23}. 
Nevertheless, for $n=18$, the construction is different since the Pappus graph (graph6string $\texttt{Qs??OGC@?O@?GgCcCG\_aOOD@?g?}$) has more subpaths. Despite the difference, it may be interesting to consider if Moore graphs are also extremal for the path number.

Finally, we prove~\cite[Conj.~18]{KSSY25}, the unique extremal graph among triangle-free graphs of a given order. Notice that Gerbner~\cite{Gerbner23} proved a related asymptotic version for $\pn_i$ for $K_r$-free graphs.
\begin{thm}
    Among triangle-free graphs, the balanced complete bipartite graph maximizes (uniquely) the number of subpaths.
\end{thm}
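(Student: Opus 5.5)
Among triangle-free graphs of order $n$, the count should be dominated by long paths, so I plan to compare $\pn_k(G)$ with $\pn_k(K_{\floorfrac n2,\ceilfrac n2})$ for each $k$ separately. The approach is to count paths by sequentially choosing vertices, as in the proof of the upper bound in \cref{itm:2} of \cref{thm:main}. A path $v_0v_1\ldots v_k$ is an ordered sequence, so $2\pn_k(G)$ counts such sequences for $k \ge 1$. The key constraint from triangle-freeness is that consecutive vertices $v_{i-1},v_i$ have disjoint neighbourhoods, so $\deg(v_{i-1})+\deg(v_i)\le n$.

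\textbf{Per-length bound.} Write $d_i=\deg(v_i)$. Given $v_0,\ldots,v_{i}$, the number of choices for $v_{i+1}$ is at most $d_i-1$ for $i\ge1$ (one neighbour, $v_{i-1}$, is already used). Note that $d_i$ depends on the earlier choices, so the bound is not directly a product of fixed numbers. To handle this I would pair consecutive steps. Since $N(v_{i-1})\cap N(v_i)=\emptyset$, the number of two-step extensions is at most
\[
(d_{i}-1)(\text{max degree of a neighbour of } v_i \text{ not yet used}),
\]
and every neighbour $u$ of $v_i$ satisfies $\deg(u)\le n-d_i$. An AM--GM-type bound then gives $d(n-d)\le \floorfrac n2\ceilfrac n2$ per two steps.

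The cleaner route is an injection or weighting argument. For $K_{a,b}$ with $a=\floorfrac n2$, $b=\ceilfrac n2$, a path alternates sides, and the counts are explicit products of the form $a(a-1)\cdots\, b(b-1)\cdots$. I aim to show that the number of walks-without-repetition of length $k$ in $G$ is at most the corresponding count in $K_{a,b}$. I would try induction on $n$. If $G$ is not complete bipartite, either $G$ is not bipartite, or $G$ is bipartite but not complete or unbalanced. In the bipartite case, adding edges only adds paths, and balancing parts increases the products by $d(n-d)\le ab$. So the real case is non-bipartite triangle-free $G$, i.e.\ $G$ contains an odd cycle of length at least $5$.

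\textbf{Non-bipartite case (main obstacle).} Here I would use a Zykov-symmetrization-type step. Take two nonadjacent vertices $x,y$ and replace $x$ by a clone of $y$ (same neighbourhood). This preserves triangle-freeness. I want it not to decrease $\pn$ when $x$ is chosen suitably, e.g.\ $x$ lies in fewer paths than $y$. The problem is that $\pn$ is not linear in vertex contributions: paths through both $x$ and $y$ interfere. That makes this step the hard part. To control it, I would compare, for each path, its image under the map swapping the roles of $x$ and $y$. Paths avoiding $x$ are preserved, and those using $x$ but not $y$ map injectively. The remaining paths, using both $x$ and $y$, must be handled with a separate counting bound. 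Repeating the symmetrization yields a complete multipartite triangle-free graph, i.e.\ complete bipartite. Strict increase at some step gives uniqueness.

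If symmetrization proves too messy, my fallback is a direct asymptotic comparison. For large $n$, $\pn(G)$ is dominated by near-Hamiltonian paths. Any non-bipartite triangle-free $G$ has, by Andrásfai-type results, minimum degree at most $2n/5$, or else it is bipartite. This should give a loss of a factor exponential in the path count against $(n/2)!^2$. Small $n$ would then be handled separately, by computer check.
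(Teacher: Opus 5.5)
There is a genuine gap. The non-bipartite case, which is the whole content of the theorem, is never actually handled.

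Your instinct to compare $\pn_k$ length by length is right; the paper proves exactly $\pn_\ell(G)\le\pn_\ell(T_n)$ with $T_n=K_{\lfloor n/2\rfloor,\lceil n/2\rceil}$. But your per-length bound fails as written. Bounding each pair of steps by $d(n-d)\le\lfloor n/2\rfloor\lceil n/2\rceil$ ignores the vertices already used. It gives about $n(n^2/4)^s$ paths of length $2s$, while $T_n$ has about $n\prod_{j<s}(n/2-j)(n/2-j-1)$. That is an exponential discrepancy at near-Hamiltonian lengths.

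Conditioning on the set $U$ of unused vertices repairs part of this. Given $v_0,\dots,v_{2j-1}$, the pairs $(v_{2j},v_{2j+1})$ number at most $x(|U|-x)\le\lfloor (n-2j)^2/4\rfloor$, where $x=|N(v_{2j-1})\cap U|$. Together with $e(G)\le\lfloor n^2/4\rfloor$ for the first edge, this gives $\pn_{2s-1}(G)\le\pn_{2s-1}(T_n)$. The same greedy count is not tight for even lengths when $n$ is odd: for $n=5$ it gives $\pn_2\le 10$, while $\pn_2(K_{2,3})=9$.

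The symmetrization step is left open exactly where it matters. Let $P_x,P_y$ count paths through only $x$, respectively only $y$, and let $Q,Q'$ count paths through both before and after cloning. Then $\pn(G')-\pn(G)=(P_y-P_x)+(Q'-Q)$, and choosing $x$ with $P_x\le P_y$ gives no control over $Q'-Q$. Since $\pn$ is not a sum of vertex weights, Zykov's argument does not transfer, and you give no replacement.

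The fallback is false as stated: a non-bipartite triangle-free graph with small minimum degree need not lose an exponential factor. Take the blow-up of $C_5$ whose classes, in cyclic order, have sizes $\lfloor\tfrac{n-3}{2}\rfloor,\lceil\tfrac{n-3}{2}\rceil,1,1,1$. It is triangle-free, non-bipartite, and has minimum degree $2$. Yet it contains $T_{n-3}$, so its path number is within a polynomial factor of $\pn(T_n)$. Also, an asymptotic argument with no explicit threshold cannot be closed by a finite computer check.

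The missing idea, which the paper uses, is to count via matchings; let $m_s$ denote the number of matchings of size $s$.
\begin{itemize}
\item Take every other edge of a path of length $2s-1$, or of the first $2s$ vertices of a path of length $2s$. This gives an ordered matching $e_1,\dots,e_s$.
\item By triangle-freeness, the terminal vertex of $e_i$ is adjacent to at most one endpoint of $e_{i+1}$. So each ordered matching admits at most two compatible orientations, which gives $\pn_{2s-1}(G)\le s!\,m_s(G)$.
\item In the even case, if both orientations exist, their terminal vertices are adjacent and hence have disjoint neighbourhoods. Together they have at most $n-2s$ extensions, which gives $\pn_{2s}(G)\le\tfrac{s!}{2}(n-2s)\,m_s(G)$.
\item Finally $m_s(G)\le m_s(T_n)$ by induction, using $s\,m_s(G)=\sum_{uv\in E(G)}m_{s-1}(G-\{u,v\})\le e(G)\,m_{s-1}(T_{n-2})$ and Mantel's theorem. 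Deleting both endpoints of $uv$ is precisely what accounts for the used vertices that your degree bound ignored.
\end{itemize}
All these inequalities are equalities for complete bipartite graphs. Hence $\pn_\ell(G)\le\pn_\ell(T_n)$ for every $\ell$, with no split into bipartite and non-bipartite cases. Uniqueness follows from equality at $\ell=1$ and the equality case of Mantel's theorem.
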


\begin{proof}
Write $T_n=K_{\lfloor n/2\rfloor,\lceil n/2\rceil}$ (it is a Turan graph)
and let $m_s(G)$ denote the number of matchings of size $s$ in $G$.

We first record that, among triangle-free graphs of order $n$ and for every $s\geq0$,
\begin{equation}\label{eq:matching-bound}
        m_s(G)\le m_s(T_n).
\end{equation}

We prove~\eqref{eq:matching-bound} by induction on $s$. The case $s=0$ is trivial. Every
matching of size $s$ is counted once for each of its $s$ edges, and
therefore
\[
 s\,m_s(G)
   =\sum_{uv\in E(G)}m_{s-1}(G-\{u,v\}).
\]
Since $G-\{u,v\}$ is triangle-free, induction and Mantel's theorem give
\[
 s \cdot m_s(G)
 \leq e(G)m_{s-1}(T_{n-2})
 \leq
 \left\lfloor\frac{n^2}{4}\right\rfloor
 m_{s-1}(T_{n-2})= s \cdot m_s(T_n).
\]

We now bound the paths using matchings. First consider a path of odd
length $2s-1$. Orient the path and take the edges in positions
\[
        1,3,\ldots,2s-1.
\]
These edges form a matching of size $s$, together with an ordering of
its edges.

Fix an ordered matching $e_1,\ldots,e_s$. There are at most two ways
to orient its edges so that they form a path in the prescribed order.
Indeed, there are two choices for the orientation of $e_1$. Once the
terminal vertex of $e_i$ has been chosen, it is adjacent to at most
one endpoint of $e_{i+1}$, since adjacency to both endpoints would
create a triangle with $e_{i+1}$.

Consequently, the number of oriented paths of length $2s-1$ is at most $2s!\,m_s(G).$
Every unoriented path has two orientations, so
\begin{equation}\label{eq:odd-path}
        \pn_{2s-1}(G)\leq s!\,m_s(G).
\end{equation}

For paths of even length $2s$, use the same matching formed by the
first $2s$ vertices and then append the last vertex. For a fixed
ordered matching, there are again at most two compatible orientations.
If both exist, their terminal vertices are the two endpoints $x,y$ of
the last matching edge. Since $xy\in E(G)$ and $G$ is triangle-free,
\[
        N(x)\cap N(y)=\emptyset.
\]
Hence the total number of unused vertices that can extend these two
orientations is at most $n-2s$. The same bound is immediate if only
one orientation exists. Therefore
\begin{equation}\label{eq:even-path}
        \pn_{2s}(G)
        \leq \frac{s!}{2}(n-2s)m_s(G).
\end{equation}

Both \eqref{eq:odd-path} and \eqref{eq:even-path} are equalities for
every complete bipartite graph: for each ordered matching there are
exactly two compatible orientations, and in the even case the numbers
of possible final vertices add up to $n-2s$. Combining these
equalities with \eqref{eq:matching-bound}, we obtain
\[
        \pn_\ell(G)\leq\pn_\ell(T_n)
\]
for every $0\leq\ell\leq n-1$. Summing over $\ell$ gives
\[
        \pn(G)\leq\pn(T_n).
\]

Finally, equality in the sum forces equality for paths of length one.
Thus
\[
        e(G)=\pn_1(G)=\pn_1(T_n)
             =\left\lfloor\frac{n^2}{4}\right\rfloor.
\]
The equality case of Mantel's theorem implies $G\cong T_n$. Hence the
balanced complete bipartite graph is the unique maximizer.
\end{proof}

% \bibliographystyle{abbrv}
% \bibliography{ref}

\end{document}